\documentclass[10pt]{amsart}%
\usepackage{amsfonts}
\usepackage{amsmath}
\usepackage{amssymb}
\usepackage{graphicx}%
\setcounter{MaxMatrixCols}{30}
\providecommand{\U}[1]{\protect\rule{.1in}{.1in}}
\newtheorem{theorem}{Theorem}
\theoremstyle{plain}

\newtheorem{corollary}{Corollary}

\newtheorem{definition}{Definition}

\newtheorem{lemma}{Lemma}

\newtheorem{proposition}{Proposition}

\numberwithin{equation}{section}
\begin{document}
\title[Absolutely $\gamma$-summing multilinear operators]{Absolutely $\gamma$-summing multilinear operators}
\author[D.M. Serrano-Rodr\'{\i}guez]{Diana Marcela Serrano-Rodr\'{\i}guez\textsuperscript{*}}
\address{Departamento de Matem\'{a}tica, UFPB, Jo\~{a}o Pessoa, PB, Brazil}
\email{dmserrano0@gmail.com}
\thanks{2010 Mathematics Subject Classification. 46G25, 47H60}
\thanks{\textsuperscript{*}Supported by Capes.}
\keywords{Absolutely summing operators}

\begin{abstract}
In this paper we introduce an abstract approach to the notion of absolutely
summing multilinear operators. We show that several previous results on
different contexts (absolutely summability, almost summability, Cohen
summability) are particular cases of our general results.

\end{abstract}
\maketitle

\section{Introduction and background}

A linear operator $u:E\rightarrow F$ is absolutely summing if ${\textstyle\sum
}u(x_{j})$ is absolutely convergent whenever ${\textstyle\sum}x_{j}$ is
unconditionally convergent. Thanks to Grothendieck we know that every
continuous linear operator from $\ell_{1}$ to any Hilbert space is absolutely
summing. This result is one of the several consequences of the famous
Grothendieck's inequality, called \textquotedblleft the fundamental theorem of
the metric theory of tensor products\textquotedblright.

The more general notion of absolutely $(p;q)$-summing operators was introduced
in the 1960's by B. Mitiagin and A. Pe\l czy\'{n}ski \cite{MiP} and A. Pietsch
\cite{stu}. In \cite{MSRam} an abstract approach to absolutely summing
operators, for very general sequence spaces was introduced and explored. This
class was called the class of absolutely $\lambda$-summing operators. The
search of abstract environments where a more general theory holds has also
been investigated in different papers, also for non-multilinear operators (we
mention \cite{MST, adv, lon} and the references therein).

In this paper we introduce a similar procedure for the multilinear setting. We
show that various well-known multilinear results (and also some other new
results) are particular cases of our approach.

Henceforth $\mathbb{N}$ represents the set of all positive integers and $E$,
$E_{1},\ldots,E_{n}$, $F$, $G$, $G_{1},\ldots,G_{n}$, and $H$ will stand for
Banach spaces over $\mathbb{K}=\mathbb{R}$ or $\mathbb{C}$. The topological
dual of $E$ is represented by $E^{\ast}$ and $B_{E^{\ast}}$ denotes its closed
unit ball.

\section{Absolutely summing multilinear operators}

The multilinear approach to absolutely summing operators was initiated by
Pietsch and followed by several authors (see \cite{BBJ, matos, pgtese} and the
references therein). The following concept was introduced by M.C. Matos
(\cite{matos}):

\begin{definition}
[Absolutely summing multilinear operators.]\label{mas}If $\frac{1}{p}\leq
\frac{1}{q_{1}}+...+\frac{1}{q_{m}}$ a multilinear operator $T\in
\mathcal{L}\left(  E_{1},...,E_{m};F\right)  $ is absolutely $\left(
p;q_{1},...,q_{m}\right)  $-summing at the point $a=\left(  a_{1}%
,...,a_{m}\right)  \in E_{1}\times...\times E_{m}$ when%
\[
\left(  T\left(  a_{1}+x_{j}^{1},...,a_{m}+x_{j}^{m}\right)  -T\left(
a_{1},...,a_{m}\right)  \right)  _{j=1}^{\infty}\in\ell_{p}\left(  F\right)
\]
for all $\left(  x_{j}^{k}\right)  _{j=1}^{\infty}\in\ell_{q_{k}}^{w}\left(
E_{k}\right)  .$This class is denoted by $\mathcal{L}_{as,\left(
p;q_{1},...,q_{m}\right)  }^{a}.$ When $a$ is the origin we just write
$\mathcal{L}_{as,\left(  p;q_{1},...,q_{m}\right)  }$, and if $T$ is
absolutely $\left(  p;q_{1},...,q_{m}\right)  $-summing everywhere we write
$T\in\mathcal{L}_{as,\left(  p;q_{1},...,q_{m}\right)  }^{ev}.$\bigskip
\end{definition}

The following characterization of $\mathcal{L}_{as,\left(  p;q_{1}%
,...,q_{m}\right)  }$ is folklore, see \cite[\textit{Theorem }$1.2$]{botcot}.

\begin{proposition}
\label{p9}$T\in\mathcal{L}_{as,\left(  p;q_{1},...,q_{m}\right)  }\left(
E_{1},...,E_{m};F\right)  $ if only if, there exists a constant $C>0$ such
that%
\begin{equation}
\left\Vert \left(  T\left(  x_{j}^{\left(  1\right)  },...,x_{j}^{\left(
m\right)  }\right)  \right)  _{j=1}^{\infty}\right\Vert _{p}\leq C\prod
_{s=1}^{m}\left\Vert \left(  x_{j}^{\left(  s\right)  }\right)  _{j=1}%
^{\infty}\right\Vert _{w,q_{s}} \label{3b}%
\end{equation}
for all $\left(  x_{j}^{\left(  s\right)  }\right)  _{j=1}^{\infty}\in
\ell_{q_{s}}^{w},$ $s=1,...,m.$ In addition, the smallest of the constants $C$
satisfying (\ref{3b}), denoted by $\left\Vert \cdot\right\Vert _{as}$, defines
a norm on $\mathcal{L}_{as,\left(  p;q_{1},...,q_{m}\right)  }\left(
E_{1},...,E_{m};F\right)  $.
\end{proposition}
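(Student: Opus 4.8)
The plan is to prove the equivalence in two steps and then check the norm properties. The implication ``(\ref{3b}) $\Rightarrow T\in\mathcal{L}_{as,(p;q_{1},\dots,q_{m})}$'' is immediate: if (\ref{3b}) holds and $(x_{j}^{(s)})_{j=1}^{\infty}\in\ell_{q_{s}}^{w}(E_{s})$ for each $s$, the right-hand side of (\ref{3b}) is finite, so $(T(x_{j}^{(1)},\dots,x_{j}^{(m)}))_{j=1}^{\infty}\in\ell_{p}(F)$. The substance of the statement is therefore the converse, together with the fact that the best constant is a norm.

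For the converse, assume $T\in\mathcal{L}_{as,(p;q_{1},\dots,q_{m})}(E_{1},\dots,E_{m};F)$. By hypothesis the $m$-linear map
\[
\Psi\colon\prod_{s=1}^{m}\ell_{q_{s}}^{w}(E_{s})\longrightarrow\ell_{p}(F),\qquad\Psi\big((x_{j}^{(1)})_{j},\dots,(x_{j}^{(m)})_{j}\big)=\big(T(x_{j}^{(1)},\dots,x_{j}^{(m)})\big)_{j},
\]
is well defined. Since (\ref{3b}) is exactly the assertion that $\Psi$ is bounded, and since an $m$-linear map between Banach spaces is bounded as soon as it is separately continuous (the multilinear uniform boundedness principle), it suffices to show that $\Psi$ is separately continuous; I would do this with the Closed Graph Theorem. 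Fix an index $s$ and fix $u^{(t)}\in\ell_{q_{t}}^{w}(E_{t})$ for $t\neq s$, and let $L_{s}\colon\ell_{q_{s}}^{w}(E_{s})\to\ell_{p}(F)$ be the linear map obtained by freezing these coordinates; both spaces are Banach. If $v_{n}\to v$ in $\ell_{q_{s}}^{w}(E_{s})$ and $L_{s}(v_{n})\to y$ in $\ell_{p}(F)$, write $v_{n}=(x_{j}^{n})_{j}$ and $v=(x_{j})_{j}$; from $\|x_{j}^{n}-x_{j}\|_{E_{s}}\le\|v_{n}-v\|_{w,q_{s}}$ we get $x_{j}^{n}\to x_{j}$ in $E_{s}$ for every $j$, so continuity of $T$ makes the $j$-th coordinate of $L_{s}(v_{n})$ converge to the $j$-th coordinate of $L_{s}(v)$, while convergence in $\ell_{p}(F)$ makes it converge to the $j$-th coordinate of $y$. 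Hence $y=L_{s}(v)$, the graph of $L_{s}$ is closed, and $L_{s}$ is bounded. (Alternatively one could run a direct Baire category argument on $\prod_{s}B_{\ell_{q_{s}}^{w}(E_{s})}$, but the route above is more economical.) This produces a constant $C>0$ with $\|\Psi(u^{(1)},\dots,u^{(m)})\|_{p}\le C\prod_{s=1}^{m}\|u^{(s)}\|_{w,q_{s}}$, which is (\ref{3b}).

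It remains to check that $\|\cdot\|_{as}$, the infimum of the constants $C$ admissible in (\ref{3b}), is a norm on $\mathcal{L}_{as,(p;q_{1},\dots,q_{m})}(E_{1},\dots,E_{m};F)$. The set of admissible $C$ is closed (pass to the limit in (\ref{3b})), so the infimum is attained and (\ref{3b}) holds with $C=\|T\|_{as}$. Homogeneity and the triangle inequality are inherited from $\|\cdot\|_{p}$ on $\ell_{p}(F)$. For definiteness, test (\ref{3b}) on the sequences $(x^{(s)},0,0,\dots)$, for which $\|(x^{(s)},0,0,\dots)\|_{w,q_{s}}=\|x^{(s)}\|_{E_{s}}$; this gives $\|T(x^{(1)},\dots,x^{(m)})\|_{F}\le\|T\|_{as}\prod_{s=1}^{m}\|x^{(s)}\|_{E_{s}}$, so $\|T\|_{as}=0$ forces $T=0$ (and, incidentally, $\|T\|\le\|T\|_{as}$).

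The main obstacle is the converse implication: extracting a single uniform estimate from the a priori merely pointwise hypothesis that $\Psi$ lands in $\ell_{p}(F)$. The closed-graph plus uniform-boundedness package above handles this, but it rests on two structural facts worth isolating — that $\ell_{q_{s}}^{w}(E_{s})$ is a Banach space under $\|\cdot\|_{w,q_{s}}$, and that this norm dominates the supremum of the coordinate norms in $E_{s}$, which is precisely what legitimises the coordinatewise limit comparisons in the closed-graph step.
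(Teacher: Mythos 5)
Your proof is correct and takes essentially the same route as the paper: the paper cites Proposition \ref{p9} as folklore and proves only its abstract generalization (Proposition \ref{esse}), where it likewise passes to the induced map $\hat{T}$ between the sequence spaces (your $\Psi$) and establishes the inequality by a closed-graph argument, with $\|T\|_{as}=\|\hat{T}\|$ giving the norm statement. Your only refinement is to make explicit the passage from the closed graph of the linear sections to boundedness of the multilinear map via separate continuity and the multilinear uniform boundedness principle, a step the paper leaves implicit.
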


For everywhere absolutely summing operators we also have a characterization by
inequalities (\cite[\textit{Theorem} 4.1]{pellscan}):

\begin{theorem}
For $T\in\mathcal{L}\left(  E_{1},...,E_{m};F\right)  $, the following
statements are equivalent$:$

$\left(  i\right)  $ $T\in\mathcal{L}_{as,\left(  p;q_{1},...,q_{m}\right)
}^{ev}\left(  E_{1},...,E_{m};F\right)  ;$

$\left(  ii\right)  ~$There is a constant $C>0$ such that%
\begin{align*}
&  \left\Vert \left(  T\left(  b_{1}+x_{j}^{\left(  1\right)  },...,b_{m}%
+x_{j}^{\left(  m\right)  }\right)  -T\left(  b_{1},...,b_{m}\right)  \right)
_{j=1}^{n}\right\Vert _{p}\\
&  \leq C\left(  \left\Vert b_{1}\right\Vert +\left\Vert \left(
x_{j}^{\left(  1\right)  }\right)  _{j=1}^{n}\right\Vert _{w,q_{1}}\right)
\cdots\left(  \left\Vert b_{m}\right\Vert +\left\Vert \left(  x_{j}^{\left(
m\right)  }\right)  _{j=1}^{n}\right\Vert _{w,q_{m}}\right)  ,
\end{align*}
for all $n.$

$\left(  iii\right)  $ There is a constant $C>0$ such that%
\begin{align}
&  \left\Vert \left(  T\left(  b_{1}+x_{j}^{\left(  1\right)  },...,b_{m}%
+x_{j}^{\left(  m\right)  }\right)  -T\left(  b_{1},...,b_{m}\right)  \right)
_{j=1}^{\infty}\right\Vert _{p}\label{d2}\\
&  \leq C\left(  \left\Vert b_{1}\right\Vert +\left\Vert \left(
x_{j}^{\left(  1\right)  }\right)  _{j=1}^{\infty}\right\Vert _{w,q_{1}%
}\right)  \cdots\left(  \left\Vert b_{m}\right\Vert +\left\Vert \left(
x_{j}^{\left(  m\right)  }\right)  _{j=1}^{\infty}\right\Vert _{w,q_{m}%
}\right)  ,\nonumber
\end{align}
for all $\left(  b_{1},...,b_{m}\right)  \in E_{1}\times\cdots\times E_{m}$
and $\left(  x_{j}^{\left(  r\right)  }\right)  _{j=1}^{\infty}\in\ell_{q_{r}%
}^{w},~r=1,...,m.$
\end{theorem}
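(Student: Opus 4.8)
The plan is to establish the cycle $(iii)\Rightarrow(ii)\Rightarrow(i)\Rightarrow(iii)$; only the last implication requires real work. The implication $(iii)\Rightarrow(ii)$ is immediate, since $\|(x_j^{(r)})_{j=1}^{n}\|_{w,q_r}\le\|(x_j^{(r)})_{j=1}^{\infty}\|_{w,q_r}$, so any constant $C$ valid in $(iii)$ is valid in $(ii)$. For $(ii)\Rightarrow(i)$, fix $a=(a_1,\dots,a_m)$ and put $b_r=a_r$ in $(ii)$: the right-hand side becomes a bound independent of $n$, and since the $\ell_p$-norm of the truncated sequences increases to the $\ell_p$-norm of the full sequence, we conclude that $(T(a_1+x_j^{(1)},\dots,a_m+x_j^{(m)})-T(a_1,\dots,a_m))_{j=1}^{\infty}\in\ell_p(F)$ for every $(x_j^{(r)})_{j}\in\ell_{q_r}^{w}(E_r)$; as $a$ is arbitrary, this is $(i)$.

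For $(i)\Rightarrow(iii)$, expand by multilinearity: for all $b=(b_1,\dots,b_m)$ and all sequences,
\[
T(b_1+x_j^{(1)},\dots,b_m+x_j^{(m)})-T(b_1,\dots,b_m)=\sum_{\emptyset\neq A\subseteq\{1,\dots,m\}}T_A^{b}\!\left((x_j^{(r)})_{r\in A}\right),
\]
where $T_A^{b}$ is the $|A|$-linear map obtained from $T$ by freezing the $s$-th entry at $b_s$ for each $s\notin A$. The heart of the argument is the claim $P(k)$, to be proved by induction on $k=|A|$: \emph{for each $A$ with $|A|=k$ there is a constant $C_A>0$ with $T_A^{b}\in\mathcal{L}_{as,(p;(q_r)_{r\in A})}$ and $\|T_A^{b}\|_{as}\le C_A\prod_{s\notin A}\|b_s\|$ for every $b$.} To get $P(k)$ I would first choose $x_j^{(s)}=0$ for all $s\notin A$ in hypothesis $(i)$ applied at the point $b$; the displayed expansion then reduces to $\sum_{\emptyset\neq B\subseteq A}T_B^{b}((x_j^{(r)})_{r\in B})$, and subtracting the finitely many lower-order terms $T_B^{b}$ with $\emptyset\neq B\subsetneq A$ — each of which produces an $\ell_p(F)$-sequence by $P(|B|)$ and Proposition~\ref{p9} — shows that $(T_A^{b}((x_j^{(r)})_{r\in A}))_{j}\in\ell_p(F)$, i.e. $T_A^{b}$ is absolutely $(p;(q_r)_{r\in A})$-summing; the base case $k=1$ is this same computation with a singleton $A$. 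It then remains to upgrade this to the homogeneous estimate in $b$. For that, observe that $(b_s)_{s\notin A}\mapsto T_A^{b}$ is $(m-|A|)$-linear into the Banach space $\mathcal{L}_{as,(p;(q_r)_{r\in A})}((E_r)_{r\in A};F)$ (completeness being standard), and it has closed graph in each variable: evaluating $(\ref{3b})$ on one-term sequences gives $\|u\|_{\infty}\le\|u\|_{as}$, so $\|\cdot\|_{as}$-convergence forces pointwise convergence, whose limit is identified using the joint continuity of $T$; hence each partial map is bounded, so the multilinear map $(b_s)_{s\notin A}\mapsto T_A^{b}$ is separately — hence jointly — continuous, which is precisely the asserted bound on $\|T_A^{b}\|_{as}$.

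With $P(1),\dots,P(m)$ in hand, $(iii)$ follows by assembling the expansion with the (quasi-)triangle inequality in $\ell_p$ (a constant $c_{m,p}$ depending only on $m$ and $p$, equal to $1$ when $p\ge 1$), then Proposition~\ref{p9} for each $T_A^{b}$, then $P(|A|)$:
\begin{align*}
\left\|\left(T(b_1+x_j^{(1)},\dots,b_m+x_j^{(m)})-T(b_1,\dots,b_m)\right)_{j=1}^{\infty}\right\|_p
&\le c_{m,p}\sum_{\emptyset\neq A}\ \|T_A^{b}\|_{as}\prod_{r\in A}\left\|(x_j^{(r)})_{j=1}^{\infty}\right\|_{w,q_r}\\
&\le C\sum_{\emptyset\neq A}\ \prod_{s\notin A}\|b_s\|\prod_{r\in A}\left\|(x_j^{(r)})_{j=1}^{\infty}\right\|_{w,q_r}\\
&\le C\prod_{r=1}^{m}\left(\|b_r\|+\left\|(x_j^{(r)})_{j=1}^{\infty}\right\|_{w,q_r}\right),
\end{align*}
with $C=c_{m,p}\max_{A}C_A$, which is $(\ref{d2})$. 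I expect the main obstacle to be exactly the inductive claim $P(k)$: one must simultaneously peel off the lower-order summing terms to recognize $T_A^{b}$ itself as absolutely summing, and then promote this to a uniform, homogeneous estimate in the frozen variables $b_s$ via the closed-graph argument inside $\mathcal{L}_{as}$; everything else is bookkeeping with the binomial-type expansion and the monotone limit in $n$.
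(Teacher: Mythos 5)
Your proposal is correct, but it follows a genuinely different route from the one the paper takes (the paper cites this statement to \cite{pellscan} and proves the abstract analogue, Theorem \ref{4.1}, whose proof specializes to it). The paper's argument is of Baire-category type on the product of the underlying spaces: using Lemma \ref{9.2} one writes $E_{1}\times\cdots\times E_{m}=\bigcup_{k}F_{k}$ with each $F_{k}$ closed, extracts an interior point of some $F_{k_{0}}$, and concludes that the associated $m$-linear map $\Phi(T)$ on $(E_{1}\times\ell_{q_{1}}^{w}(E_{1}))\times\cdots\times(E_{m}\times\ell_{q_{m}}^{w}(E_{m}))$ is bounded on a ball, hence continuous, which is exactly inequality (\ref{d2}). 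You instead expand $T(b+x_j)-T(b)$ multilinearly over the nonempty subsets $A$ of coordinates, prove by induction on $|A|$ that each frozen operator $T_{A}^{b}$ is absolutely summing, and then upgrade to the homogeneous bound $\|T_{A}^{b}\|_{as}\le C_{A}\prod_{s\notin A}\|b_{s}\|$ via a closed-graph argument into the complete space $\mathcal{L}_{as}$ (using $\|u\|\le\|u\|_{as}$ from (\ref{3b}) on one-term sequences) plus the fact that separately continuous multilinear maps between Banach spaces are jointly continuous; assembling with the (quasi-)triangle inequality in $\ell_{p}$ gives (\ref{d2}). Both arguments ultimately rest on Baire category (yours through closed graph and uniform boundedness), but yours buys explicit, term-by-term control of the frozen operators and yields the homogeneity in $b$ directly, at the price of needing completeness of $(\mathcal{L}_{as},\|\cdot\|_{as})$ and the quasi-norm constant $c_{m,p}$ when $p<1$ (where $\|\cdot\|_{as}$ is only a quasi-norm; closed graph and Banach--Steinhaus still apply, so this is harmless), whereas the paper's single category argument handles all coordinates at once without any binomial bookkeeping and transfers verbatim to the abstract $\gamma$-setting. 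Your reductions $(iii)\Rightarrow(ii)\Rightarrow(i)$ coincide with the paper's easy implications (monotone truncation), and no step of your sketch appears to fail.
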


Moreover, the smallest $C$ such that (\ref{d2}) is satisfied, denoted by
$\left\Vert \cdot\right\Vert _{ev}$, defines a norm on $\mathcal{L}_{as}%
^{ev}\left(  E_{1},...,E_{m};F\right)  .$ In both cases $\left(
\mathcal{L}_{as,\left(  p;q_{1},...,q_{m}\right)  }\left(  E_{1}%
,...,E_{m};F\right)  ,\left\Vert \cdot\right\Vert _{as}\right)  $ and $\left(
\mathcal{L}_{as,\left(  p;q_{1},...,q_{m}\right)  }^{ev}\left(  E_{1}%
,...,E_{m};F\right)  ,\left\Vert \cdot\right\Vert _{ev}\right)  $ are a Banach spaces.

\bigskip

In this paper we consider a quite general version of the notion of everywhere
absolutely summing operators. We work with quite arbitrary sequence spaces
instead of $\ell_{q_{r}}^{w}\left(  E_{r}\right)  $ and $\ell_{p}(F).$ Our
results encompass several particular approaches found in the literature, as
detailed in Section \ref{appl}.

\section{Absolutely $\gamma_{\left(  s;s_{1},...,s_{m}\right)  }$-summing
multilinear operators}

In the 70's M. Ramanujan introduced an abstract approach to the notion of
absolutely summing operators \cite{MSRam}. In \cite{MSRam} some sequence
spaces, denoted by $\lambda\left(  E\right)  $ and $\lambda\left[  F\right]  $
are considered (the case $\ell_{p}=\lambda$ is a particular case, with
$\lambda\left(  E\right)  =\ell_{p}^{w}\left(  E\right)  $ and $\lambda\left[
F\right]  =\ell_{p}\left(  F\right)  $). A bounded linear operator
$T:E\rightarrow F$ is absolutely $\lambda$-summing if, for each $x=\left(
x_{i}\right)  _{i=1}^{\infty}\in\lambda\left(  E\right)  $, we have $T\left(
x\right)  =\left(  Tx_{i}\right)  \in\lambda\left[  F\right]  $. For our
purposes the following simple definition is sufficient:

\begin{definition}
Let $E$ be a Banach space. A sequence space in $E$ is a vector space
$\gamma\left(  E\right)  \subset E^{\mathbb{N}}$ with a complete norm
$\left\Vert \cdot\right\Vert _{\gamma\left(  E\right)  }$. The following
properties on $\gamma\left(  E\right)  $ are tacitly assumed to hold:

(P1) $\left\Vert \left(  x_{k}\right)  _{k=1}^{\infty}\right\Vert
_{\gamma\left(  E\right)  }=\sup_{n}\left\Vert \left(  x_{k}\right)
_{k=1}^{n}\right\Vert _{\gamma\left(  E\right)  }.$

(P2) $\left\Vert \left(  x_{n}\right)  _{n=1}^{\infty}\right\Vert
_{\gamma\left(  E\right)  }=\left\Vert x_{k}\right\Vert _{E}$, for all
$\left(  x_{n}\right)  _{n=1}^{\infty}=\left(  0,...,0,x_{k},0,...\right)  .$
\end{definition}

The following definition is the natural abstract approach to summability at a
given point.

\begin{definition}
Let $m\in\mathbb{N}$ and $E_{1},...,E_{m}$,$F$ be Banach spaces. An operator
$T\in\mathcal{L}\left(  E_{1},...,E_{m};F\right)  $ is $\gamma_{\left(
s;s_{1},...,s_{m}\right)  }$-summing at $\left(  a_{1},...,a_{m}\right)  \in
E_{1}\times\cdots\times E_{m}$ when%
\[
\left(  T\left(  a_{1}+x_{j}^{\left(  1\right)  },...,a_{m}+x_{j}^{\left(
m\right)  }\right)  -T\left(  a_{1},...,a_{m}\right)  \right)  _{j=1}^{\infty
}\in\gamma_{s}\left(  F\right)
\]
whenever $\left(  x_{j}^{\left(  r\right)  }\right)  _{j=1}^{\infty}\in
\gamma_{s_{r}}\left(  E_{r}\right)  ,$ $r=1,...,m.$
\end{definition}

\bigskip

We denote the space of the $m$-linear operators from $E_{1}\times\cdots\times
E_{m}$ to $F$ which are $\gamma_{\left(  s;s_{1},...,s_{m}\right)  }$-summing
at $\left(  a_{1},...,a_{m}\right)  $ by $\Pi_{\gamma_{\left(  s;s_{1}%
,...,s_{m}\right)  }}^{a}\left(  E_{1},...,E_{m};F\right)  .~$It is plain that
$\Pi_{\gamma_{\left(  s;s_{1},...,s_{m}\right)  }}^{a}\left(  E_{1}%
,...,E_{m};F\right)  $ is a linear subspace of $\mathcal{L}\left(
E_{1},...,E_{m};F\right)  .$ When $a=\left(  0,...,0\right)  $ we write
$\Pi_{\gamma_{\left(  s;s_{1},...,s_{m}\right)  }}\left(  E_{1},...,E_{m}%
;F\right)  $ and when $T$ is $\gamma_{\left(  s;s_{1},...,s_{m}\right)  }%
$-summing at all $\left(  a_{1},...,a_{m}\right)  $ we write $\Pi
_{\gamma_{\left(  s;s_{1},...,s_{m}\right)  }}^{ev}\left(  E_{1}%
,...,E_{m};F\right)  .$ When $s_{1}=\cdots=s_{m}$, we write $\Pi
_{\gamma_{\left(  s;s_{1}\right)  }}\left(  E_{1},...,E_{m};F\right)  $, or
$\Pi_{\gamma_{\left(  s;s_{1}\right)  }}^{a}\left(  E_{1},...,E_{m};F\right)
$, or $\Pi_{\gamma_{\left(  s;s_{1}\right)  }}^{ev}\left(  E_{1}%
,...,E_{m};F\right)  ,$ respectively.\bigskip

The following result is an abstract version of Proposition \ref{p9}:

\begin{proposition}
\label{esse} $T\in\Pi_{\gamma_{\left(  s;s_{1},...,s_{m}\right)  }}\left(
E_{1},...,E_{m};F\right)  $ if and only if there exists a constant $C>0,$ such
that%
\begin{equation}
\left\Vert \left(  T\left(  x_{j}^{\left(  1\right)  },...,x_{j}^{\left(
m\right)  }\right)  \right)  _{j=1}^{\infty}\right\Vert _{\gamma_{s}\left(
F\right)  }\leq C\prod_{r=1}^{m}\left\Vert \left(  x_{j}^{\left(  r\right)
}\right)  _{j=1}^{\infty}\right\Vert _{\gamma_{s_{r}}\left(  E_{r}\right)  }
\label{3}%
\end{equation}
for all $\left(  x_{j}^{\left(  r\right)  }\right)  _{j=1}^{\infty}\in
\gamma_{s_{r}}\left(  E_{r}\right)  ,$ $r=1,...,m.$ Moreover, the smallest $C$
such that (\ref{3}) is satisfied, denoted by $\pi\left(  \cdot\right)  $,
defines a norm on $\Pi_{\gamma_{\left(  s;s_{1},...,s_{m}\right)  }}\left(
E_{1},...,E_{m};F\right)  $.
\end{proposition}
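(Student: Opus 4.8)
The plan is to prove the two implications of the equivalence, and then verify that $\pi$ is a norm. The easy direction is that the inequality~(\ref{3}) implies $T\in\Pi_{\gamma_{\left(  s;s_{1},...,s_{m}\right)  }}$: if $\left(  x_{j}^{\left(  r\right)  }\right)  _{j=1}^{\infty}\in\gamma_{s_{r}}\left(  E_{r}\right)  $ for each $r$, then the right-hand side of~(\ref{3}) is finite, so $\left\Vert \left(  T\left(  x_{j}^{\left(  1\right)  },...,x_{j}^{\left(  m\right)  }\right)  \right)  _{j=1}^{\infty}\right\Vert _{\gamma_{s}\left(  F\right)  }<\infty$, which by the completeness of the norm on $\gamma_{s}(F)$ (together with property~(P1), ensuring that the sup of the norms of the finite truncations being finite forces membership) gives $\left(  T\left(  x_{j}^{\left(  1\right)  },...,x_{j}^{\left(  m\right)  }\right)  \right)  _{j=1}^{\infty}\in\gamma_{s}(F)$. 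So $T$ is $\gamma_{\left(  s;s_{1},...,s_{m}\right)  }$-summing at the origin.

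For the converse I would argue by contradiction, mimicking the classical closed-graph / gliding-hump argument used for absolutely summing operators. Suppose $T\in\Pi_{\gamma_{\left(  s;s_{1},...,s_{m}\right)  }}$ but no constant $C$ works. Then for every $n\in\mathbb{N}$ there are sequences $\left(  x_{j}^{\left(  r\right)  ,n}\right)  _{j}\in\gamma_{s_{r}}(E_{r})$, which after normalizing we may take with $\left\Vert \left(  x_{j}^{\left(  r\right)  ,n}\right)  _{j}\right\Vert _{\gamma_{s_{r}}(E_{r})}\le 2^{-n}$ for each $r$, yet $\left\Vert \left(  T\left(  x_{j}^{\left(  1\right)  ,n},...,x_{j}^{\left(  m\right)  ,n}\right)  \right)  _{j}\right\Vert _{\gamma_{s}(F)}>1$ (here one uses the homogeneity of $T$ in each coordinate to redistribute the scalars and make the product of the input norms as small as desired while keeping the output norm large; note this is where multilinearity, rather than mere linearity, is used). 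Concatenating these families into single sequences $\left(  z_{j}^{(r)}\right)_j$ by laying the $n$-th block after the previous ones, property~(P1) and the summability $\sum_n 2^{-n}<\infty$ guarantee $\left(  z_{j}^{(r)}\right)_j\in\gamma_{s_{r}}(E_{r})$ for each $r$. But on the $n$-th block the operator $T$ applied coordinatewise reproduces $\left(  T\left(  x_{j}^{\left(  1\right)  ,n},...,x_{j}^{\left(  m\right)  ,n}\right)  \right)  _{j}$; by property~(P2) applied blockwise (or directly by~(P1)) the $\gamma_{s}(F)$-norm of the image sequence dominates the norm of each block, hence is $\ge 1$ for all $n$, which is fine, but more carefully the norm of the image must be $\ge \sup_n 1$ on truncations reaching into block $n$ with enough mass — the cleanest formulation is that the image sequence fails to lie in $\gamma_{s}(F)$ because its truncations have unbounded norm, contradicting $T\in\Pi_{\gamma_{\left(  s;s_{1},...,s_{m}\right)  }}$. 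I expect the main obstacle to be precisely this concatenation step: one must choose the block positions and the decay rate so that (P1) genuinely forces the concatenated input sequences into $\gamma_{s_{r}}(E_{r})$ while (P1)/(P2) simultaneously force the output sequence out of $\gamma_{s}(F)$; in the abstract setting we only have (P1) and (P2) at our disposal (no unconditionality, no lattice structure), so the argument has to be carried out using only the monotone-limit property~(P1).

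Finally, for the norm statement: once the equivalence is established, $\pi(T)$ is by definition the infimum of the constants $C$ in~(\ref{3}), which is attained (it is again a supremum over finitely supported inputs, bounded by any valid $C$, so the infimum is itself a valid constant). Homogeneity $\pi(\lambda T)=|\lambda|\,\pi(T)$ and subadditivity $\pi(T_1+T_2)\le\pi(T_1)+\pi(T_2)$ follow immediately from~(\ref{3}) by the triangle inequality in $\gamma_{s}(F)$. Definiteness, $\pi(T)=0\Rightarrow T=0$, follows from property~(P2): taking a single nonzero entry in each input slot, (P2) reduces~(\ref{3}) with $C=0$ to $\left\Vert T(x^{(1)},\dots,x^{(m)})\right\Vert_F\le 0$ for all choices of the $x^{(r)}$, so $T\equiv 0$.
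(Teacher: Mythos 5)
Your hard direction (summability at the origin $\Rightarrow$ inequality (\ref{3})) has a genuine gap at exactly the point you flag. First, the normalization you choose cannot produce a contradiction: with only \textit{(P1)} and \textit{(P2)} at hand, the best lower bound relating the concatenated image to one of its blocks is obtained by writing the block $\left(0,\dots,0,y_{a},\dots,y_{b},0,\dots\right)$ as a difference of two truncations, which gives
\[
\left\Vert \left(0,\dots,0,y_{a},\dots,y_{b},0,\dots\right)\right\Vert_{\gamma_{s}\left(F\right)}\leq\left\Vert \left(y_{j}\right)_{j=1}^{b}\right\Vert_{\gamma_{s}\left(F\right)}+\left\Vert \left(y_{j}\right)_{j=1}^{a-1}\right\Vert_{\gamma_{s}\left(F\right)}\leq2\left\Vert \left(y_{j}\right)_{j=1}^{\infty}\right\Vert_{\gamma_{s}\left(F\right)}.
\]
So infinitely many blocks of output norm $>1$ only force the total norm to be $\geq1/2$; your claim that the truncations of the image sequence "have unbounded norm" does not follow, and indeed you acknowledge the step is unresolved. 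To make the gliding hump work you must use the $m$-homogeneity to rescale so that the $n$-th block has input norms $\leq2^{-n}$ but output norm $\geq2^{n}$ (say), and then invoke the displayed estimate to contradict finiteness of $\left\Vert \cdot\right\Vert_{\gamma_{s}\left(F\right)}$. Two further repairs are needed: the concatenation of infinitely many \emph{infinite} sequences is not defined, so you must first truncate each chosen family to a finite block (legitimate by \textit{(P1)}, which lets a finite truncation of the output nearly attain its norm); and membership of the concatenated input in $\gamma_{s_{r}}\left(E_{r}\right)$ is not a consequence of \textit{(P1)} alone — it should be argued by completeness (the finitely supported partial concatenations form a Cauchy sequence) together with the fact that \textit{(P1)}+\textit{(P2)} force norm convergence in $\gamma_{s_{r}}\left(E_{r}\right)$ to imply coordinatewise convergence, which identifies the limit with the concatenation.

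For comparison, the paper avoids the gliding hump altogether: it considers the induced $m$-linear map $\hat{T}:\gamma_{s_{1}}\left(E_{1}\right)\times\cdots\times\gamma_{s_{m}}\left(E_{m}\right)\rightarrow\gamma_{s}\left(F\right)$ defined coordinatewise (well defined precisely because $T$ is $\gamma$-summing at the origin), shows that its graph is closed — using \textit{(P1)} and \textit{(P2)} to pass from convergence in the sequence-space norms to coordinatewise convergence, and the continuity of $T$ to identify the limit — and then concludes continuity of $\hat{T}$ by a closed graph argument, with $\pi\left(T\right)=\Vert\hat{T}\Vert$. Your verification of the norm axioms for $\pi$ (homogeneity, subadditivity, and definiteness via \textit{(P2)}) is fine, and your easy direction is acceptable once one reads (\ref{3}) as asserting membership of the image sequence; but as written the core contradiction step of your converse does not go through without the corrections above, either in your gliding-hump form or by switching to the closed-graph route.
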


\begin{proof}
Consider $\hat{T}:\gamma_{s_{1}}\left(  E_{1}\right)  \times\cdots\times
\gamma_{s_{m}}\left(  E_{m}\right)  \rightarrow\gamma_{s}\left(  F\right)  $
given by%
\[
\hat{T}\left(  \left(  x_{j}^{\left(  1\right)  }\right)  _{j=1}^{\infty
},...,\left(  x_{j}^{\left(  m\right)  }\right)  _{j=1}^{\infty}\right)
=\left(  T\left(  x_{j}^{\left(  1\right)  },...,x_{j}^{\left(  m\right)
}\right)  \right)  _{j=1}^{\infty}.
\]
Let $\left(  x_{n}\right)  _{n=1}^{\infty}:=\left(  \left(  \left(
x_{n,j}^{\left(  1\right)  }\right)  _{j=1}^{\infty},...,\left(
x_{n,j}^{\left(  m\right)  }\right)  _{j=1}^{\infty}\right)  \right)
_{n=1}^{\infty}$ be a sequence in $\gamma_{s_{1}}\left(  E_{1}\right)
\times\cdots\times\gamma_{s_{m}}\left(  E_{m}\right)  $ converging to
\[
x=\left(  \left(  x_{j}^{\left(  1\right)  }\right)  _{j=1}^{\infty
},...,\left(  x_{j}^{\left(  m\right)  }\right)  _{j=1}^{\infty}\right)
\]
and such that
\[
\lim_{n\rightarrow\infty}\hat{T}(x_{n})=\left(  z_{j}\right)  _{j=1}^{\infty
}\in\gamma_{s}\left(  F\right)  .
\]
Let us prove that $\left(  z_{j}\right)  _{j=1}^{\infty}=\hat{T}\left(
x\right)  .$ Since
\begin{equation}
\lim_{n\rightarrow\infty}\left(  \hat{T}\left(  \left(  x_{n,j}^{\left(
1\right)  }\right)  _{j=1}^{\infty},...,\left(  x_{n,j}^{\left(  m\right)
}\right)  _{j=1}^{\infty}\right)  \right)  =\lim_{n\rightarrow\infty}\left(
T\left(  x_{n,j}^{\left(  1\right)  },...,x_{n,j}^{\left(  m\right)  }\right)
\right)  _{j=1}^{\infty}=\left(  z_{j}\right)  _{j=1}^{\infty},\nonumber
\end{equation}
from \textit{(P1) }and\textit{\ (P2)} we have
\begin{equation}
\lim_{n\rightarrow\infty}T\left(  x_{n,j}^{1},...,x_{n,j}^{m}\right)
=z_{j}\text{, } \label{dd2}%
\end{equation}

and
\[
\lim_{n\rightarrow\infty}x_{n,j}^{\left(  r\right)  }=x_{j}^{\left(  r\right)
}%
\]
for all $j\in\mathbb{N}$, and $r=1,...,m$. Thus, from the continuity of $T$,
it follows that%
\begin{equation}
\lim_{n\rightarrow\infty}T\left(  x_{n,j}^{\left(  1\right)  },...,x_{n,j}%
^{\left(  m\right)  }\right)  =T\left(  x_{j}^{\left(  1\right)  }%
,...,x_{j}^{\left(  m\right)  }\right)  \label{2fevc}%
\end{equation}
for all $j\in\mathbb{N}$. From (\ref{dd2}) and (\ref{2fevc}) we have%
\begin{equation}
T\left(  x_{j}^{\left(  1\right)  },...,x_{j}^{\left(  m\right)  }\right)
=z_{j} \label{2fevf}%
\end{equation}
for all $j\in\mathbb{N}$. Hence%
\[
\lim_{n\rightarrow\infty}\hat{T}\left(  \left(  x_{n,j}^{\left(  1\right)
}\right)  _{j=1}^{\infty},...,\left(  x_{n,j}^{\left(  m\right)  }\right)
_{j=1}^{\infty}\right)  =\hat{T}\left(  \left(  x_{j}^{\left(  1\right)
}\right)  _{j=1}^{\infty},...,\left(  x_{j}^{\left(  m\right)  }\right)
_{j=1}^{\infty}\right)
\]
and $\hat{T}$ is continuous and the rest of the proof follows
straightforwardly. The converse is simple and note that $\pi\left(  T\right)
=\left\Vert \hat{T}\right\Vert .$
\end{proof}

\bigskip

The following lemma can be proved following the lines of
\ \cite[\textit{Lemma} $9.2$]{studia}.

\begin{lemma}
\label{9.2}If $T\in\Pi_{\gamma_{\left(  s;s_{1},...,s_{m}\right)  }}\left(
E_{1},...,E_{m};F\right)  $ and $\left(  a_{1},...,a_{m}\right)  \in
E_{1}\times\cdots\times E_{m}$, then there is a constant $C_{a_{1},...,a_{m}%
}\geq0,$ such that%
\[
\left\Vert \left(  T\left(  a_{1}+x_{j}^{\left(  1\right)  },...,a_{m}%
+x_{j}^{\left(  m\right)  }\right)  -T\left(  a_{1},...,a_{m}\right)  \right)
_{j=1}^{\infty}\right\Vert _{\gamma_{s}\left(  F\right)  }\leq C_{a_{1}%
,...,a_{m}}%
\]
for all $\left(  x_{j}^{\left(  r\right)  }\right)  _{j=1}^{\infty}\in
\gamma_{s_{r}}\left(  E_{r}\right)  $ and $\left\Vert \left(  x_{j}^{\left(
r\right)  }\right)  _{j=1}^{\infty}\right\Vert _{\gamma_{s_{r}}\left(
E_{r}\right)  }\leq1,$ $r=1,...,m.$
\end{lemma}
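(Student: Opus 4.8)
The plan is to strip the ``origin part'' off the increment and reduce everything to Proposition \ref{esse}. For each nonempty $S\subseteq\{1,\dots,m\}$ let $T^{a}_{S}\colon\prod_{r\in S}E_{r}\to F$ be the $|S|$-linear operator obtained from $T$ by freezing the $r$-th variable at $a_{r}$ for every $r\notin S$. Expanding by $m$-linearity gives, for every $j$,
\[
T\bigl(a_{1}+x_{j}^{(1)},\dots,a_{m}+x_{j}^{(m)}\bigr)-T(a_{1},\dots,a_{m})=\sum_{\emptyset\neq S\subseteq\{1,\dots,m\}}T^{a}_{S}\bigl((x_{j}^{(r)})_{r\in S}\bigr),
\]
so that, once each summand is known to define a sequence in $\gamma_{s}(F)$, the triangle inequality for $\|\cdot\|_{\gamma_{s}(F)}$ yields
\[
\Bigl\|\bigl(T(a_{1}+x_{j}^{(1)},\dots,a_{m}+x_{j}^{(m)})-T(a_{1},\dots,a_{m})\bigr)_{j=1}^{\infty}\Bigr\|_{\gamma_{s}(F)}\leq\sum_{\emptyset\neq S}\Bigl\|\bigl(T^{a}_{S}((x_{j}^{(r)})_{r\in S})\bigr)_{j=1}^{\infty}\Bigr\|_{\gamma_{s}(F)}.
\]
For $S=\{1,\dots,m\}$ this is exactly the situation of Proposition \ref{esse}, so that term is at most $\pi(T)\prod_{r=1}^{m}\|(x_{j}^{(r)})_{j}\|_{\gamma_{s_{r}}(E_{r})}\leq\pi(T)$, using $\|(x_{j}^{(r)})_{j}\|_{\gamma_{s_{r}}(E_{r})}\leq1$.

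The heart of the matter is to show that for each nonempty $S\subsetneq\{1,\dots,m\}$ the operator $T^{a}_{S}$ belongs to $\Pi_{\gamma_{(s;(s_{r})_{r\in S})}}\bigl((E_{r})_{r\in S};F\bigr)$, with some finite norm $\pi(T^{a}_{S})$. I would obtain this by repeating the proof of Proposition \ref{esse} with $T^{a}_{S}$ in the role of $T$: one checks that the associated map
\[
\widehat{T^{a}_{S}}\colon\prod_{r\in S}\gamma_{s_{r}}(E_{r})\longrightarrow F^{\mathbb{N}},\qquad\widehat{T^{a}_{S}}\bigl((x^{(r)})_{r\in S}\bigr)=\bigl(T^{a}_{S}((x_{j}^{(r)})_{r\in S})\bigr)_{j=1}^{\infty},
\]
actually takes values in $\gamma_{s}(F)$, and then that it is continuous. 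Continuity is established by the very limiting computation used in Proposition \ref{esse}: if $(x^{(r)}_{n})_{n}\to x^{(r)}$ in $\gamma_{s_{r}}(E_{r})$, then by (P1) and (P2) one has the coordinatewise convergence $x^{(r)}_{n,j}\to x^{(r)}_{j}$; the joint continuity of $T$, hence of $T^{a}_{S}$, forces $T^{a}_{S}((x^{(r)}_{n,j})_{r\in S})\to T^{a}_{S}((x^{(r)}_{j})_{r\in S})$ for every $j$; and since coordinate evaluations on $\gamma_{s}(F)$ are continuous (again (P1) and (P2)), any $\gamma_{s}(F)$-limit of $\widehat{T^{a}_{S}}((x^{(r)}_{n})_{r\in S})$ must equal $\widehat{T^{a}_{S}}((x^{(r)})_{r\in S})$. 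Thus $\widehat{T^{a}_{S}}$ is a continuous $|S|$-linear map, and setting $\pi(T^{a}_{S}):=\|\widehat{T^{a}_{S}}\|$ we get that the $S$-term above is at most $\pi(T^{a}_{S})\prod_{r\in S}\|(x_{j}^{(r)})_{j}\|_{\gamma_{s_{r}}(E_{r})}\leq\pi(T^{a}_{S})$.

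Summing over $S$, the constant
\[
C_{a_{1},\dots,a_{m}}:=\sum_{\emptyset\neq S\subseteq\{1,\dots,m\}}\pi(T^{a}_{S})
\]
does the job. I expect the main obstacle to be the very first point in the previous paragraph: seeing that each $\widehat{T^{a}_{S}}$ takes values in $\gamma_{s}(F)$ at all. One cannot read this off Proposition \ref{esse} by substituting the constant sequence $(a_{r},a_{r},\dots)$, since such a sequence need not lie in $\gamma_{s_{r}}(E_{r})$ (for example it is not in $\ell_{q}^{w}(E_{r})$ when $q<\infty$). This is precisely the step to be carried out as in \cite[Lemma 9.2]{studia}: once it is known that the full increment $\bigl(T(a_{1}+x_{j}^{(1)},\dots)-T(a_{1},\dots)\bigr)_{j}$ lies in $\gamma_{s}(F)$ for all admissible sequences, membership of the individual $T^{a}_{S}$-terms follows by a short induction on $|S|$ — set $x^{(r)}=0$ for $r\notin S$, so that the increment becomes $\sum_{\emptyset\neq S'\subseteq S}\widehat{T^{a}_{S'}}((x^{(r)})_{r\in S'})$, and peel off the terms with $S'\subsetneq S$, which belong to $\gamma_{s}(F)$ by the inductive hypothesis. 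With that membership secured, the closed-graph part and the bookkeeping above are routine.
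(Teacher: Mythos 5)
Your argument is essentially right, but it hinges on a point you leave implicit at the decisive moment, and that point is really a defect of the statement itself. As printed, the hypothesis is $T\in\Pi_{\gamma_{(s;s_{1},\dots,s_{m})}}(E_{1},\dots,E_{m};F)$, i.e.\ summability at the origin; from that alone the increment sequences at $(a_{1},\dots,a_{m})$ need not belong to $\gamma_{s}(F)$ at all (the classes at the origin and at other points already differ in the classical $\ell_{p}$ case, which is a particular instance of this framework), so neither the norm in the conclusion nor your key sentence ``once it is known that the full increment lies in $\gamma_{s}(F)$ for all admissible sequences'' is available. What the lemma must assume --- as \cite[Lemma 9.2]{studia} does, and as it is used in the proof of Theorem \ref{4.1}, where $T\in\Pi^{ev}_{\gamma_{(s;s_{1},\dots,s_{m})}}$ --- is that $T$ is $\gamma_{(s;s_{1},\dots,s_{m})}$-summing at the point $(a_{1},\dots,a_{m})$. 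Your quoted phrase is exactly that hypothesis; it is not a step ``to be carried out'', and your write-up should say so explicitly, because as it stands that membership is derived from nothing.

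With the hypothesis read this way, your proof is correct, and it is a genuine proof where the paper offers only a pointer to \cite{studia}. The expansion into the frozen-coordinate operators $T^{a}_{S}$, the induction on $|S|$ (zeroing the variables outside $S$ and peeling off the strictly smaller subsets) to see that each $\widehat{T^{a}_{S}}$ takes values in $\gamma_{s}(F)$, and the closed-graph argument for its continuity (coordinate evaluations are controlled by the $\gamma$-norm thanks to (P1) and (P2), exactly as in the proof of Proposition \ref{esse}) all go through, and $C_{a_{1},\dots,a_{m}}=\sum_{\emptyset\neq S}\pi(T^{a}_{S})$ does the job. Note that your induction applied to $S=\{1,\dots,m\}$ already shows that summability at $(a_{1},\dots,a_{m})$ forces every $T^{a}_{S}$, in particular $T$ itself, to be summing at the origin, so you need not invoke membership in $\Pi_{\gamma_{(s;s_{1},\dots,s_{m})}}$ as a separate assumption for that term. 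Two caveats, both at the same level of rigor as the paper itself: passing from a closed graph to continuity of the multilinear maps $\widehat{T^{a}_{S}}$ uses a multilinear closed-graph/separate-continuity argument, which Proposition \ref{esse} also uses tacitly; and your route, reducing summability at a point to summability at the origin of the associated frozen-coordinate operators in the spirit of Matos \cite{matos}, is self-contained within the axioms (P1)--(P2), which is precisely what the abstract setting requires, whether or not it reproduces the argument of \cite{studia} verbatim.
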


As in the case of the Proposition \ref{esse} we have a characterization for
the operators in $\Pi_{\gamma_{\left(  s;s_{1},...,s_{m}\right)  }}%
^{ev}\left(  E_{1},...,E_{m};F\right)  .$ The argument used in the proof is an
adaptation of an original argument due to M.C.$\ $Matos (see \cite{matos}):

\begin{theorem}
\label{4.1}For $T\in\mathcal{L}\left(  E_{1},...,E_{m};F\right)  $, the
following statements are equivalent$:$

$\left(  i\right)  $ $T\in\Pi_{\gamma_{\left(  s;s_{1},...,s_{m}\right)  }%
}^{ev}\left(  E_{1},...,E_{m};F\right)  ;$

$\left(  ii\right)  ~$There is a constant $C>0$ such that
\begin{align*}
&  \left\Vert \left(  T\left(  b_{1}+x_{j}^{\left(  1\right)  },...,b_{m}%
+x_{j}^{\left(  m\right)  }\right)  -T\left(  b_{1},...,b_{m}\right)  \right)
_{j=1}^{n}\right\Vert _{\gamma_{s}\left(  F\right)  }\\
&  \leq C\left(  \left\Vert b_{1}\right\Vert +\left\Vert \left(
x_{j}^{\left(  1\right)  }\right)  _{j=1}^{n}\right\Vert _{\gamma_{s_{1}%
}\left(  E_{1}\right)  }\right)  \cdots\left(  \left\Vert b_{m}\right\Vert
+\left\Vert \left(  x_{j}^{\left(  m\right)  }\right)  _{j=1}^{n}\right\Vert
_{\gamma_{s_{m}}\left(  E_{m}\right)  }\right)  ,
\end{align*}
for all positive integer $n.$

$\left(  iii\right)  $ There is a constant $C>0$ such that
\begin{align}
&  \left\Vert \left(  T\left(  b_{1}+x_{j}^{\left(  1\right)  },...,b_{m}%
+x_{j}^{\left(  m\right)  }\right)  -T\left(  b_{1},...,b_{m}\right)  \right)
_{j=1}^{\infty}\right\Vert _{\gamma_{s}\left(  F\right)  }\label{d}\\
&  \leq C\left(  \left\Vert b_{1}\right\Vert +\left\Vert \left(
x_{j}^{\left(  1\right)  }\right)  _{j=1}^{\infty}\right\Vert _{\gamma_{s_{1}%
}\left(  E_{1}\right)  }\right)  \cdots\left(  \left\Vert b_{m}\right\Vert
+\left\Vert \left(  x_{j}^{\left(  m\right)  }\right)  _{j=1}^{\infty
}\right\Vert _{\gamma_{s_{m}}\left(  E_{m}\right)  }\right)  ,\nonumber
\end{align}
for all $\left(  b_{1},...,b_{m}\right)  \in E_{1}\times\cdots\times E_{m}$
and $\left(  x_{j}^{\left(  r\right)  }\right)  _{j=1}^{\infty}\in
\gamma_{s_{r}}\left(  E_{r}\right)  ,r=1,...,m.$ In addition, the smallest of
the constants $C$ satisfying (\ref{d}), denoted by $\pi^{ev}\left(
\cdot\right)  $, defines a norm on $\Pi_{\gamma_{\left(  s;s_{1}%
,...,s_{m}\right)  }}^{ev}\left(  E_{1},...,E_{m};F\right)  $.
\end{theorem}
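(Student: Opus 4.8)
The implications $(iii)\Rightarrow(ii)$ and $(ii)\Rightarrow(i)$ are the easy directions, so I would dispatch them first. For $(iii)\Rightarrow(ii)$, given $n$, one applies $(iii)$ to the truncated sequences $\left(x_{j}^{(r)}\right)_{j=1}^{n}$ (padded with zeros), using property (P1) to see that the $\gamma_{s}(F)$-norm of the truncated left-hand side is at most that of the full sequence, and (P1) again for each $\gamma_{s_{r}}(E_{r})$-norm on the right. For $(ii)\Rightarrow(i)$: fix $a=(a_{1},\dots,a_{m})$ and an arbitrary $\left(x_{j}^{(r)}\right)_{j=1}^{\infty}\in\gamma_{s_{r}}(E_{r})$; applying $(ii)$ with $b_{r}=a_{r}$ shows that the partial sums $\left(T(a_{1}+x_{j}^{(1)},\dots)-T(a_{1},\dots)\right)_{j=1}^{n}$ are uniformly bounded in $\gamma_{s}(F)$-norm, and by (P1) the supremum over $n$ is the $\gamma_{s}(F)$-norm of the full sequence, so it lies in $\gamma_{s}(F)$; hence $T\in\Pi_{\gamma_{\left(s;s_{1},\dots,s_{m}\right)}}^{a}$ for every $a$, i.e. $T\in\Pi_{\gamma_{\left(s;s_{1},\dots,s_{m}\right)}}^{ev}$.

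The substantive implication is $(i)\Rightarrow(iii)$, and this is where I expect the real work, following Matos's argument. Assume $T\in\Pi_{\gamma_{\left(s;s_{1},\dots,s_{m}\right)}}^{ev}$. The first step is to expand $T(b_{1}+x_{j}^{(1)},\dots,b_{m}+x_{j}^{(m)})-T(b_{1},\dots,b_{m})$ by multilinearity into a sum over the nonempty subsets $\emptyset\neq M\subseteq\{1,\dots,m\}$ of terms of the form $T_{M}\!\left((x_{j}^{(r)})_{r\in M};(b_{r})_{r\notin M}\right)$, where in coordinate $r\in M$ we insert $x_{j}^{(r)}$ and in coordinate $r\notin M$ we insert $b_{r}$. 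Each such term, viewed as a function of the variables indexed by $M$ with the $b_{r}$'s frozen, is $|M|$-linear; the point is to bound the $\gamma_{s}(F)$-norm of $\left(T_{M}(\cdots)\right)_{j=1}^{\infty}$ by $C_{M}\prod_{r\in M}\left\Vert (x_{j}^{(r)})_{j}\right\Vert_{\gamma_{s_{r}}(E_{r})}\cdot\prod_{r\notin M}\left\Vert b_{r}\right\Vert$. To get this one freezes the $b_{r}$ ($r\notin M$) and applies a Lemma-\ref{9.2}-type uniform boundedness statement to the $|M|$-linear map in the remaining variables — but one must first reduce to the case where the frozen $b_{r}$ have norm $1$ (or $\le 1$) by homogeneity, and invoke that $T$ restricted to those coordinates being fixed still lies in the relevant everywhere-summing class. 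The cleanest route is: the map $(y^{(r)})_{r\in M}\mapsto T_{M}$ belongs to $\Pi_{\gamma_{(s;(s_{r})_{r\in M})}}$ (at the origin), so Proposition \ref{esse} gives a constant $\pi_{M}(b)$ depending on the frozen $b_{r}$; then Lemma \ref{9.2} (or a direct closed-graph argument on the bilinear-in-$(b,x)$ map) upgrades this to the factorized bound with the constant depending on the $b_{r}$ only through $\prod_{r\notin M}\Vert b_{r}\Vert$. Summing over all $2^{m}-1$ subsets $M$ and using the triangle inequality in $\gamma_{s}(F)$, then bounding the resulting sum of products by the single product $C\prod_{r=1}^{m}\left(\Vert b_{r}\Vert+\Vert(x_{j}^{(r)})_{j}\Vert_{\gamma_{s_{r}}(E_{r})}\right)$ via the elementary expansion of that product, yields $(iii)$.

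The main obstacle is precisely the uniformity of the constant $C_{M}$ in $(i)\Rightarrow(iii)$: a priori one only knows, for each fixed choice of frozen $b_{r}$, that the $|M|$-linear map is summing with \emph{some} constant, and one needs this constant to be controlled linearly in each $\Vert b_{r}\Vert$. I would handle this by a Baire-category / closed-graph argument exactly as in \cite[Lemma 9.2]{studia} and \cite{matos}: consider the family of linear maps obtained by fixing the $x^{(r)}$-sequences in the unit ball and letting the $b_{r}$ vary, show each is bounded, and apply uniform boundedness; completeness of $\gamma_{s}(F)$ (built into the definition of a sequence space) is what makes the closed-graph step legitimate. Finally, for the norm statement, once $(iii)$ holds the smallest such $C$ is readily checked to be subadditive and absolutely homogeneous (infimum of constants that work), and positive-definiteness follows from (P2): taking $(x_{j}^{(r)})=(y^{(r)},0,0,\dots)$ recovers $\Vert T(b_{1}+y^{(1)},\dots)-T(b_{1},\dots)\Vert_{F}$ on the left, so $\pi^{ev}(T)=0$ forces all such differences to vanish, hence $T=0$; completeness of the norm is inherited along the lines already used for $\pi(\cdot)$ in Proposition \ref{esse} together with (P1).
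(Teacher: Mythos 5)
Your proposal is correct, but the decisive implication is organized quite differently from the paper. The paper handles $(i)\Rightarrow(ii)$ with a single global Baire-category argument: it introduces the $m$-linear map $\Phi(T)$ on $G_1\times\cdots\times G_m$, where $G_r=E_r\times\gamma_{s_r}(E_r)$ carries the $\ell_1$-norm (well-definedness of $\Phi(T)$ being exactly $(i)$), shows that the sets $F_{k,(x^{(r)})}$ of points $(b_1,\dots,b_m)$ at which the relevant norm is at most $k$ are closed by checking continuity of the finite-truncation functionals $D_k$ via (P2), covers $E_1\times\cdots\times E_m$ by the sets $F_k$ using Lemma \ref{9.2}, extracts an interior point by Baire, and transfers boundedness on a ball to continuity of $\Phi(T)$, so that $\pi^{ev}(T)=\Vert\Phi(T)\Vert$. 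You instead decompose $T(b_1+x_j^{(1)},\dots,b_m+x_j^{(m)})-T(b_1,\dots,b_m)$ over nonempty $M\subseteq\{1,\dots,m\}$, note that each frozen operator is $\gamma$-summing at the origin (this reduction is correct: apply $(i)$ at the point having $b_r$ in the coordinates outside $M$ and $0$ in those of $M$, with zero sequences outside $M$), obtain from Proposition \ref{esse} a constant depending on the frozen $b_r$, make it uniform by a closed-graph plus uniform-boundedness argument, and reassemble through the expansion of $\prod_{r}\left(\Vert b_r\Vert+\Vert(x_j^{(r)})_j\Vert_{\gamma_{s_r}(E_r)}\right)$, which even yields a constant $C\le\max_M C_M$. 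This is a legitimate alternative: it is more modular and makes the constant explicit, at the price of invoking two standard multilinear Baire facts that you should state or prove, namely that a map which is multilinear in the $b$-variables (multilinear, not linear, when more than one coordinate is frozen) and has closed graph in each variable into the complete space $\gamma_s(F)$ is continuous, and that a pointwise bounded family of continuous multilinear maps between Banach spaces is uniformly bounded on the unit balls. One further remark: your step $(ii)\Rightarrow(i)$, exactly like the paper's $(ii)\Rightarrow(iii)$, tacitly reads (P1) as the stronger ``finitely determined'' property (finiteness of $\sup_n$ of the truncated norms already forces membership of the full sequence in $\gamma_s(F)$); this matches the paper's own usage, so it is not a gap relative to the paper, but it deserves to be made explicit if you write the argument out.
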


\begin{proof}
$\left(  iii\right)  \Rightarrow\left(  ii\right)  $ and $\left(  iii\right)
\Rightarrow\left(  i\right)  $ are immediate.

$\left(  ii\right)  \Rightarrow\left(  iii\right)  $ is a straightforward
consequence of \textit{(P1).}

$\left(  i\right)  \Rightarrow\left(  ii\right)  .$ Define $G_{r}=E_{r}%
\times\gamma_{s_{r}}\left(  E_{r}\right)  $, $r=1,...,m$, with the $\ell_{1}$norm.

For the sake of simplicity, we write, for all $k=1,...,m,$
\[
\left(  b_{k},\left(  x_{j}^{\left(  k\right)  }\right)  _{j=1}^{n}\right)
:=\left(  b_{k},x_{n}^{\left(  k\right)  }\right)
\]
and consider the continuous $m$-linear operator%
\[
\Phi\left(  T\right)  :G_{1}\times\cdots\times G_{m}\longrightarrow\gamma
_{s}\left(  F\right)
\]
given by%
\[
\left(  \left(  b_{1},x_{\infty}^{\left(  1\right)  }\right)  ,...,\left(
b_{m},x_{\infty}^{\left(  m\right)  }\right)  \right)  \longrightarrow\left(
T\left(  b_{1}+x_{j}^{\left(  1\right)  },...,b_{m}+x_{j}^{\left(  m\right)
}\right)  -T\left(  b_{1},...,b_{m}\right)  \right)  _{j=1}^{\infty}.
\]

Let%
\begin{align*}
&  F_{k,\left(  x_{j}^{\left(  1\right)  }\right)  _{j=1}^{\infty},...,\left(
x_{j}^{\left(  m\right)  }\right)  _{j=1}^{\infty}}\\
&  :=\left\{  \left(  b_{1},...,b_{m}\right)  \in E_{1}\times\cdots\times
E_{m};\left\Vert \Phi\left(  T\right)  \left(  \left(  b_{1},x_{\infty
}^{\left(  1\right)  }\right)  ,...,\left(  b_{m},x_{\infty}^{\left(
m\right)  }\right)  \right)  \right\Vert _{\gamma_{s}\left(  F\right)  }\leq
k\right\}
\end{align*}
with $\left(  x_{j}^{\left(  r\right)  }\right)  _{j=1}^{\infty}\in
B_{\gamma_{s_{r}}\left(  E_{r}\right)  },r=1,...,m.$ For all positive integers
$n$, let%
\begin{align*}
&  F_{k,\left(  x_{j}^{\left(  1\right)  }\right)  _{j=1}^{n},...,\left(
x_{j}^{\left(  m\right)  }\right)  _{j=1}^{n}}\\
&  =\left\{  \left(  b_{1},...,b_{m}\right)  \in E_{1}\times\cdots\times
E_{m};\left\Vert \Phi\left(  T\right)  \left(  \left(  b_{1},x_{n}^{\left(
1\right)  }\right)  ,...,\left(  b_{m},x_{n}^{\left(  m\right)  }\right)
\right)  \right\Vert _{\gamma_{s}\left(  F\right)  }\leq k\right\}  .
\end{align*}
Using \textit{(P1) }we have%
\[
F_{k,\left(  x_{j}^{\left(  1\right)  }\right)  _{j=1}^{\infty},...,\left(
x_{j}^{\left(  m\right)  }\right)  _{j=1}^{\infty}}=\bigcap_{n\in\mathbb{N}%
}F_{k,\left(  x_{j}^{\left(  1\right)  }\right)  _{j=1}^{n},...,\left(
x_{j}^{\left(  m\right)  }\right)  _{j=1}^{n}}.
\]
For all $\left(  x_{j}^{\left(  r\right)  }\right)  _{j=1}^{\infty}\in
B_{\gamma_{s_{r}}\left(  E_{r}\right)  }$, $r=1,...,m$ and fixed $k$, consider%
\[
D_{k}:E_{1}\times\cdots\times E_{m}\rightarrow\lbrack0,\infty),
\]
given by%
\[
D_{k}\left(  b_{1},...,b_{m}\right)  =\left\Vert \left(  T\left(  b_{1}%
+x_{j}^{\left(  1\right)  },...,b_{m}+x_{j}^{\left(  m\right)  }\right)
-T\left(  b_{1},...,b_{m}\right)  \right)  _{j=1}^{k}\right\Vert _{\gamma
_{s}\left(  F\right)  }.
\]

Let us see that $D_{k}$ is continuous for all $k\in\mathbb{N}.$ Let $\left(
b_{n}\right)  _{n=1}^{\infty}=\left(  \left(  b_{n}^{1},...,b_{n}^{m}\right)
\right)  _{n=1}^{\infty}$ be a sequence in $E_{1}\times\cdots\times E_{m}$
converging to $b=\left(  b_{1},...,b_{m}\right)  $. Note that%
\begin{align}
&  \lim_{n\rightarrow\infty}D_{k}\left(  b_{n}^{1},...,b_{n}^{m}\right)
\label{mmm2}\\
&  =\left\Vert \lim_{n\rightarrow\infty}\left(  T\left(  b_{n}^{1}%
+x_{j}^{\left(  1\right)  },...,b_{n}^{m}+x_{j}^{\left(  m\right)  }\right)
-T\left(  b_{n}^{1},...,b_{n}^{m}\right)  \right)  _{j=1}^{k}\right\Vert
_{\gamma_{s}\left(  F\right)  }.\nonumber
\end{align}
Let us write
\[
h_{n}^{j}:=\left(  T\left(  b_{n}^{1}+x_{j}^{\left(  1\right)  },...,b_{n}%
^{m}+x_{j}^{\left(  m\right)  }\right)  -T\left(  b_{n}^{1},...,b_{n}%
^{m}\right)  \right)
\]
and%
\[
h_{j}:=\left(  T\left(  b_{1}+x_{j}^{\left(  1\right)  },...,b_{m}%
+x_{j}^{\left(  m\right)  }\right)  -T\left(  b_{1},...,b_{m}\right)  \right)
.
\]
Since $T$ is bounded, then for each $j\in\{1,...,k\},$ we have $h_{n}%
^{j}\underset{n\rightarrow\infty}{{\LARGE \rightarrow}}h_{j}$, i.e.,
\[
\left\Vert h_{n}^{j}-h_{j}\right\Vert _{F}<\frac{\epsilon}{k}\text{, for all
}n\geq n_{0}^{j}%
\]
for all $j\in\{1,...,k\}.$ Now, since%
\begin{align*}
&  \left\Vert \left(  h_{n}^{1}-h_{1},...,h_{n}^{k}-h_{k},0,...\right)
\right\Vert _{\gamma_{s}\left(  F\right)  }\\
&  \leq\left\Vert \left(  h_{n}^{1}-h_{1},0,...\right)  \right\Vert
_{\gamma_{s}\left(  F\right)  }+...+\left\Vert \left(  0,....,0,h_{n}%
^{k}-h_{k},0,...\right)  \right\Vert _{\gamma_{s}\left(  F\right)  }\\
&  \overset{(P2)}{=}\left\Vert h_{n}^{1}-h_{1}\right\Vert _{F}+...+\left\Vert
h_{n}^{k}-h_{k}\right\Vert _{F}\\
&  \leq\epsilon,
\end{align*}

for all $n\geq\tilde{n}_{0}$, with $\tilde{n}_{0}=\max\{n_{0}^{j}\},$ then we
have
\[
\left\Vert \left(  h_{n}^{1},...,h_{n}^{k}\right)  -\left(  h_{1}%
,...,h_{k}\right)  \right\Vert _{\gamma_{s}\left(  F\right)  }\leq
\epsilon\text{ }%
\]
for all $n\geq\tilde{n}_{0}.$ So, from (\ref{mmm2}) we obtain
\[
\lim_{n\rightarrow\infty}D_{k}\left(  b_{n}^{1},...,b_{n}^{m}\right)
=D_{k}\left(  b_{1},...,b_{m}\right)  .
\]
Now we note that $F_{k,\left(  x_{j}^{\left(  1\right)  }\right)  _{j=1}%
^{n},...,\left(  x_{j}^{\left(  m\right)  }\right)  _{j=1}^{n}}$ is closed
since%
\[
F_{k,\left(  x_{j}^{\left(  1\right)  }\right)  _{j=1}^{n},...,\left(
x_{j}^{\left(  m\right)  }\right)  _{j=1}^{n}}=D_{n}^{-1}\left(  \left[
0,k\right]  \right)  ,
\]
and thus $F_{k,\left(  x_{j}^{\left(  1\right)  }\right)  _{j=1}^{\infty
},...,\left(  x_{j}^{\left(  m\right)  }\right)  _{j=1}^{\infty}}$ is closed.

Let
\[
F_{k}:=%
{\displaystyle\bigcap}
F_{k,\left(  x_{j}^{\left(  1\right)  }\right)  _{j=1}^{\infty},...,\left(
x_{j}^{\left(  m\right)  }\right)  _{j=1}^{\infty}},
\]
where the intersection is taken over all $\left(  x_{j}^{\left(  r\right)
}\right)  _{j=1}^{\infty}\in B_{\gamma_{s_{r}}\left(  E_{r}\right)
},~r=1,...,m$. From Lemma \ref{9.2}, we have%
\[
E_{1}\times\cdots\times E_{m}=\bigcup\limits_{k\in\mathbb{N}}F_{k},
\]

and from Baire Category Theorem there is an interior point $\left(
b_{1},...,b_{m}\right)  $ of some $F_{k_{0}}.$ Hence, there is a
$0<\varepsilon<1$ such that%
\begin{equation}
\left\Vert \Phi\left(  T\right)  \left(  \left(  c_{1},\left(  x_{j}^{\left(
1\right)  }\right)  _{j=1}^{\infty}\right)  ,...,\left(  c_{m},\left(
x_{j}^{\left(  m\right)  }\right)  _{j=1}^{\infty}\right)  \right)
\right\Vert _{\gamma_{s}\left(  F\right)  }\leq k_{0} \label{2fevz}%
\end{equation}
whenever $\left\Vert c_{r}-b_{r}\right\Vert <\varepsilon$ and $\left(
x_{j}^{\left(  r\right)  }\right)  _{j=1}^{\infty}\in B_{\gamma_{s_{r}}\left(
E_{r}\right)  },~r=1,...,m$. If
\[
\left\Vert \left(  v_{r},\left(  x_{j}^{(r)}\right)  _{j=1}^{\infty}\right)
\right\Vert <\varepsilon
\]
for all $r=1,...,m$, we have%
\[
\left\Vert v_{r}\right\Vert <\varepsilon\text{ and }\left\Vert \left(
x_{j}^{(r)}\right)  _{j=1}^{\infty}\right\Vert _{\gamma_{s_{r}}\left(
E_{r}\right)  }<\varepsilon<1.
\]
So, using (\ref{2fevz}), it follows that
\[
\left\Vert \Phi\left(  T\right)  \left[  \left(  b_{1}+v_{1},\left(
x_{j}^{(1)}\right)  _{j=1}^{\infty}\right)  ,...,\left(  b_{m}+v_{m},\left(
x_{j}^{\left(  m\right)  }\right)  _{j=1}^{\infty}\right)  \right]
\right\Vert _{\gamma_{s}\left(  F\right)  }\leq k_{0}.
\]
We conclude that $\Phi\left(  T\right)  $ is bounded in the open ball of
radius $\varepsilon$ centered at
\[
\left(  \left(  b_{1},\left(  0\right)  _{j=1}^{\infty}\right)  ,...,\left(
b_{m},\left(  0\right)  _{j=1}^{\infty}\right)  \right)  \in G_{1}\times
\cdots\times G_{m},
\]
and we conclude that $\Phi\left(  T\right)  $ is continuous.

Note that from the definition of $\Phi(T)$ we have
\[
\pi^{ev}\left(  T\right)  =\left\Vert \Phi\left(  T\right)  \right\Vert
\]
and the infimum in (\ref{d}) is attained.
\end{proof}

Using standard arguments, now adapted to our abstract setting, one can easily
conclude that $\left(  \Pi_{\gamma_{\left(  s;s_{1},...,s_{m}\right)  }}%
^{ev}\left(  E_{1},...,E_{m};F\right)  ,\pi^{ev}\right)  $ is a Banach space.
Moreover, considering the notion of ideals of multilinear mappings in the
sense of \cite{Flo}, we have

\begin{theorem}
\label{mp}$(\Pi_{\gamma_{\left(  s;s_{1},...,s_{m}\right)  }}^{ev},\pi^{ev})$
is a Banach ideal of $m$-linear mappings.
\end{theorem}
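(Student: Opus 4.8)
The plan is to verify, term by term, the axioms defining a Banach ideal of $m$-linear mappings in the sense of \cite{Flo}. For all Banach spaces involved these require: (1) $\Pi_{\gamma_{(s;s_1,\dots,s_m)}}^{ev}(E_1,\dots,E_m;F)$ is a linear subspace of $\mathcal{L}(E_1,\dots,E_m;F)$ carrying a complete norm $\pi^{ev}$; (2) the rank-one map $(x_1,\dots,x_m)\mapsto\varphi_1(x_1)\cdots\varphi_m(x_m)\,y$, for $\varphi_r\in E_r^{*}$ and $y\in F$, belongs to the class with $\pi^{ev}$-norm exactly $\|\varphi_1\|\cdots\|\varphi_m\|\,\|y\|$ (so in particular the canonical $m$-form on $\mathbb{K}^m$ has $\pi^{ev}=1$); (3) the ideal inequality $\pi^{ev}\big(w\circ T\circ(u_1,\dots,u_m)\big)\le\|w\|\,\pi^{ev}(T)\prod_{r=1}^{m}\|u_r\|$. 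Most of (1) is already available: $\Pi^{ev}_{\gamma}$ is a subspace of $\mathcal{L}$ (noted right after its definition) and $\pi^{ev}$ is a norm (Theorem \ref{4.1}); completeness is the ``standard argument'' mentioned just before the statement, which one spells out by taking a $\pi^{ev}$-Cauchy sequence $(T_k)$, observing that it is $\|\cdot\|$-Cauchy in $\mathcal{L}(E_1,\dots,E_m;F)$ and hence converges there to some $T$, and passing to the limit in inequality (\ref{d}) — using (P1) to truncate to finitely many coordinates, where the limit is the ordinary one — to conclude $T\in\Pi^{ev}_{\gamma}$ and $\pi^{ev}(T_k-T)\to0$.

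The substance is axiom (3). Fix $T\in\Pi^{ev}_{\gamma}(E_1,\dots,E_m;F)$, bounded linear $u_r:G_r\to E_r$ and $w:F\to H$, and set $S:=w\circ T\circ(u_1,\dots,u_m)$. For $(b_1,\dots,b_m)\in G_1\times\cdots\times G_m$ and $(y_j^{(r)})_j\in\gamma_{s_r}(G_r)$, $m$-linearity of $T$ gives
\[
S\big(b_1+y_j^{(1)},\dots,b_m+y_j^{(m)}\big)-S(b_1,\dots,b_m)
=w\Big(T\big(u_1b_1+u_1y_j^{(1)},\dots,u_mb_m+u_my_j^{(m)}\big)-T(u_1b_1,\dots,u_mb_m)\Big).
\]
I would then apply inequality (\ref{d}) from Theorem \ref{4.1} to $T$ at the points $u_rb_r$ with the sequences $(u_ry_j^{(r)})_j$, push the resulting element of $\gamma_s(F)$ forward by $w$, and use the norm estimates $\|(u_ry_j^{(r)})_j\|_{\gamma_{s_r}(E_r)}\le\|u_r\|\,\|(y_j^{(r)})_j\|_{\gamma_{s_r}(G_r)}$ together with $\|(wz_j)_j\|_{\gamma_s(H)}\le\|w\|\,\|(z_j)_j\|_{\gamma_s(F)}$. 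This delivers inequality (\ref{d}) for $S$ with constant $\|w\|\,\pi^{ev}(T)\prod_r\|u_r\|$, so Theorem \ref{4.1} yields $S\in\Pi^{ev}_{\gamma}(G_1,\dots,G_m;H)$ and the desired bound on $\pi^{ev}(S)$. For axiom (2), take $T=\varphi_1\otimes\cdots\otimes\varphi_m\otimes y$ and expand $\prod_r(\varphi_r(b_r)+\varphi_r(y_j^{(r)}))-\prod_r\varphi_r(b_r)$ into its $2^m-1$ monomials; each monomial, as a sequence in $j$, is a scalar sequence lying in $\gamma_s(\mathbb{K})$ times the fixed vector $y$, and summing the corresponding $\gamma_s$-norm bounds gives (\ref{d}) with constant $\prod_r\|\varphi_r\|\,\|y\|$, while testing on a single norming coordinate shows this is attained, so $\pi^{ev}(T)=\prod_r\|\varphi_r\|\,\|y\|$.

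The point needing the most care — and on which the computation above silently relies — is the compatibility of the sequence-space construction with bounded linear maps: for $v\in\mathcal{L}(E;E')$ the coordinatewise operator $(x_j)_j\mapsto(vx_j)_j$ must carry $\gamma_s(E)$ into $\gamma_s(E')$ with norm $\le\|v\|$ (and likewise for every $\gamma_{s_r}$), and $(c_j)_j\mapsto(c_jy)_j$ must carry $\gamma_s(\mathbb{K})$ into $\gamma_s(E)$ with norm $\|y\|$. Neither follows from (P1)--(P2) in isolation — for instance $\ell_\infty$ with its usual norm satisfies (P1)--(P2) — so these should be read as part of the standing hypotheses on the family $\{\gamma_s\}$; they are satisfied by all the concrete sequence spaces ($\ell_p(\cdot)$, $\ell_p^w(\cdot)$, Cohen's $\ell_p\langle\cdot\rangle$, the almost summing spaces, and so on) treated in Section \ref{appl}. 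Granting this, the rest is routine bookkeeping with Theorem \ref{4.1}; the main obstacle is thus isolating and invoking the right compatibility property, after which axioms (2) and (3) fall out and, together with the facts already recorded for (1), complete the verification.
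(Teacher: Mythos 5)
The paper offers no actual proof of this theorem (it is asserted via ``standard arguments''), so the value of your write-up lies entirely in whether the verification closes. Axioms (1) and (3) are fine as you present them: completeness by truncating with (P1) and passing to the limit is the standard argument, and your observation that the ideal inequality needs a \emph{linear stability} hypothesis --- that $(x_j)_j\mapsto (vx_j)_j$ maps $\gamma(E)$ into $\gamma(E')$ with norm $\le\Vert v\Vert$, which is nowhere among (P1)--(P2) --- is correct and points at a genuine implicit assumption of the paper (though your $\ell_\infty$ remark is not a counterexample to stability; $\ell_\infty$ is stable, the point is simply that (P1)--(P2) impose no functorial link between $\gamma(E)$ and $\gamma(E')$).

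The genuine gap is in your axiom (2). After expanding $\prod_r\bigl(\varphi_r(b_r)+\varphi_r(x_j^{(r)})\bigr)-\prod_r\varphi_r(b_r)$ into $2^m-1$ monomials, you assert that each monomial is ``a scalar sequence lying in $\gamma_s(\mathbb{K})$ times $y$.'' The monomial indexed by a nonempty $S\subseteq\{1,\dots,m\}$ is a coordinatewise product of scalar sequences taken from $\gamma_{s_r}(\mathbb{K})$, $r\in S$, and for it to land in $\gamma_s(\mathbb{K})$ you need a H\"older-type multiplicativity $\prod_{r\in S}\gamma_{s_r}(\mathbb{K})\subseteq\gamma_s(\mathbb{K})$ \emph{for every nonempty} $S$, with norm estimates. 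This follows neither from (P1)--(P2) nor from the stability hypothesis you added, and it can fail even in the motivating classical case: for $\gamma_{s_r}(E)=\ell_{q_r}^{u}(E)$, $\gamma_s(F)=\ell_p(F)$ with $p=1$, $q_1=q_2=2$, taking $b_1=0$, $x_j^{(2)}=0$ and $\varphi_2(b_2)=1$ reduces the difference sequence to $(\varphi_1(x_j^{(1)}))_j\in\ell_2\setminus\ell_1$ in general, so $\varphi_1\otimes\varphi_2\otimes y$ is \emph{not} everywhere $(1;2,2)$-summing and your claimed identity $\pi^{ev}(T)=\Vert\varphi_1\Vert\Vert\varphi_2\Vert\Vert y\Vert$ is unattainable. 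In other words, axiom (2) is exactly the ``nontriviality'' that the paper only later tacitly assumes in the Dvoretzky--Rogers section; any complete proof of the theorem must add it as a hypothesis (the cleanest form being that the canonical form $\sigma_m:(\lambda_1,\dots,\lambda_m)\mapsto\lambda_1\cdots\lambda_m$ lies in $\Pi^{ev}_{\gamma_{(s;s_1,\dots,s_m)}}(\mathbb{K},\dots,\mathbb{K};\mathbb{K})$ with $\pi^{ev}(\sigma_m)=1$, after which the rank-one norm follows from your axiom (3) via $T=M_y\circ\sigma_m\circ(\varphi_1,\dots,\varphi_m)$ together with the lower bound $\pi^{ev}\ge\Vert\cdot\Vert$ from (P2)), rather than being derivable by the monomial expansion as written.
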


\section{Applications\label{appl}}

In this section we show that several results related to everywhere summability
found on the literature can be regarded as particular cases of our abstract approach:

\begin{itemize}
\item \textbf{Absolutely summing multilinear operators}
\end{itemize}

The notion of everywhere absolutely summing operators was introduced by M.C.
Matos \cite{matos} and explored in \cite{pellscan}.

We just need to consider
\[
\gamma_{s_{k}}\left(  E_{k}\right)  =\ell_{q_{k}}^{u}\left(  E_{k}\right)
\text{, for all }k=1,...,m
\]
and%
\[
\gamma_{s}\left(  F\right)  =\ell_{p}\left(  F\right)  ,
\]
with $\frac{1}{p}\leq\frac{1}{q_{1}}+...+\frac{1}{q_{m}}$, to recover the
definitions from \cite[Pag. 221]{pellscan}. Since the above sequence spaces
satisfy \textit{(P1)} and\textit{ (P2)}, our approach recovers the results
from \cite[\textit{Theorem }4.1]{pellscan} and \cite[\textit{Lemma
}9.2\textit{\ }and\textit{ Proposition }9.4.]{studia}\textit{.}

\begin{itemize}
\item \textbf{Almost summing multilinear operators}
\end{itemize}

The notion of everywhere almost summing multilinear operators was introduced
in \cite{Darchiv} and later explored in \cite{pelrib}. In our abstract setting
it suffices to consider
\[
\gamma_{s_{k}}\left(  E_{k}\right)  =\ell_{p_{k}}^{u}\left(  E_{k}\right)
,~\text{for all }k=1,...,m,
\]
and%
\[
\gamma_{s}\left(  F\right)  =Rad\left(  F\right)
\]

to recover the definitions from \cite[Section 3]{pelrib}. We note that
$Rad\left(  F\right)  $ satisfies \textit{(P1)} and \textit{(P2). }In
fact,\textit{ (P2) }is easily proved, and to obtain
\[
\left\Vert \left(  x_{k}\right)  _{k=1}^{\infty}\right\Vert _{Rad(F)}=\sup
_{n}\left\Vert \left(  x_{k}\right)  _{k=1}^{n}\right\Vert _{Rad(F)}%
\]
we use the Contraction Principle and Hahn-Banach Theorem.\textit{ }So, we
recover \cite[\textit{Theorem }3.7\textit{. Theorem }4.4. ]{pelrib}.

\bigskip

\begin{itemize}
\item \textbf{Cohen strongly summing multilinear operators}
\end{itemize}

The notion of everywhere Cohen strongly $p$-summing multilinear operators,
denoted by $\mathcal{L}_{Coh,p}\left(  E_{1},...,E_{m};F\right)  $, was
introduced in \cite{jamtese, campos}. We consider%
\[
\gamma_{s_{k}}\left(  E_{k}\right)  =\ell_{p}\left(  E_{k}\right)  \text{, for
all }k=1,...,m
\]
and%
\[
\gamma_{s}\left(  F\right)  =\ell_{p}\left\langle F\right\rangle \text{,}%
\]
to recover the definitions from \cite[Definition 6.1.1]{jamtese}. Since
$\ell_{p}\left\langle F\right\rangle $ satisfies \textit{(P1)} and\textit{
(P2),} we recover \cite[\textit{Proposition} 6.1.10, \textit{Theorem} 6.1.12,
and \textit{Proposition} 6.2]{jamtese}. We stress that our approach also shows
that the ideal of everywhere Cohen strongly $p$-summing multilinear operators
is complete; for technical reasons this result was not proved in
\cite{jamtese}.

\section{Dvoretzky-Rogers type theorems}

Until 1950 it was not known if every infinite-dimensional Banach space had an
unconditionally convergent series which fails to be absolutely convergent;
this question is due to Banach \cite[p. 40]{Banach32}. In 1950, A. Dvoretzky
and C.A. Rogers \cite{DR} gave a positive answer to this question:

\bigskip

\textbf{Theorem} (Dvoretzky-Rogers, 1950). The unconditionally convergent
series and absolutely summing convergent series coincide in a Banach space $E
$ if and only if $\dim E=\infty.$

\bigskip

In this Section we remark that if there is a Dvoretzky-Rogers type theorem for
absolutely $\gamma_{\left(  s;s_{1}\right)  }$-summing linear operators, then
this result is inherited by the multilinear maps.

So, we will suppose that the following holds true:%
\[
E\text{ is finite dimensional}\Longleftrightarrow\Pi_{\gamma_{\left(
s,s_{1}\right)  }}\left(  E;E\right)  =\mathcal{L}\left(  E;E\right)
\]

From now on we will tacitly suppose that $\Pi_{\gamma_{\left(  s;s_{1}%
,...,s_{m}\right)  }}\left(  E_{1},...,E_{m};F\right)  $ is nontrivial, i.e.,
contains the $m$-linear finite type operators.

With some effort one can obtain the following Dvoretzky-Rogers type theorem in
our abstract framework which, as we will mention later, recovers various
particular results:

\begin{theorem}
[Dvoretzky-Rogers Theorem for $\gamma$-summing operators]\label{tal}Let $E$ a
Banach space and $m\geq2.$ The following statements are equivalent:

$i)$ $E$ is infinite-dimensional

$ii)$ $\Pi_{\gamma_{\left(  s,s_{1}\right)  }}^{a}\left(  ^{m}E;E\right)
\not =\mathcal{L}\left(  ^{m}E,E\right)  $ for every $\left(  a_{1}%
,...,a_{m}\right)  \in E^{m}$ with either $a_{i}\not =0$ for every $i$ or
$a_{i}=0$ for only one $i.$

$iii)$ $\Pi_{\gamma_{\left(  s,s_{1}\right)  }}^{a}\left(  ^{m}E;E\right)
\not =\mathcal{L}\left(  ^{n}E,E\right)  $ for some $\left(  a_{1}%
,...,a_{m}\right)  \in E^{m}$ with either $a_{i}\not =0$ for every $i$ or
$a_{i}=0$ for only one $i.$
\end{theorem}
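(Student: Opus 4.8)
The plan is to establish the cycle $(i)\Rightarrow(ii)\Rightarrow(iii)\Rightarrow(i)$. The implication $(ii)\Rightarrow(iii)$ is immediate, since the family of points $a$ satisfying the stated restriction is nonempty.

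For $(i)\Rightarrow(ii)$, which carries the real content, I would first invoke the linear Dvoretzky--Rogers hypothesis: as $E$ is infinite-dimensional, $\Pi_{\gamma_{(s,s_1)}}(E;E)\neq\mathcal{L}(E;E)$, so there exist a bounded linear $u\colon E\to E$ and a sequence $(x_j)_{j=1}^{\infty}\in\gamma_{s_1}(E)$ with $(u(x_j))_{j=1}^{\infty}\notin\gamma_s(E)$. The key elementary observation is that every admissible point $a=(a_1,\ldots,a_m)$ — all $a_i\neq0$, or a unique vanishing coordinate — has an index $i_0$ with $a_r\neq0$ for all $r\neq i_0$. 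Fixing such an $i_0$ and picking, by Hahn--Banach, $\phi_r\in E^{\ast}$ with $\phi_r(a_r)=1$ for $r\neq i_0$, set
\[
T(z_1,\ldots,z_m):=\Bigl(\textstyle\prod_{r\neq i_0}\phi_r(z_r)\Bigr)\,u(z_{i_0}),
\]
a continuous $m$-linear operator. Evaluating the increment of $T$ at $a$ against the test sequences $x_j^{(i_0)}:=x_j$ and $x_j^{(r)}:=0$ for $r\neq i_0$, which all lie in $\gamma_{s_1}(E)$, the scalar prefactors equal $\prod_{r\neq i_0}\phi_r(a_r)=1$ and the increment collapses to $u(a_{i_0}+x_j)-u(a_{i_0})=u(x_j)$, whence $(T(a_1+x_j^{(1)},\ldots,a_m+x_j^{(m)})-T(a_1,\ldots,a_m))_{j=1}^{\infty}=(u(x_j))_{j=1}^{\infty}\notin\gamma_s(E)$. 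Thus $T\in\mathcal{L}({}^mE;E)\setminus\Pi_{\gamma_{(s,s_1)}}^{a}({}^mE;E)$, proving $(ii)$. This is exactly where admissibility is used: with two vanishing coordinates every product $\prod_{r\neq i_0}\phi_r(a_r)$ carries a zero factor, whatever $i_0$ one chooses, and the trick breaks down.

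For $(iii)\Rightarrow(i)$ I would argue by contraposition: assuming $\dim E<\infty$, I will show $\Pi_{\gamma_{(s,s_1)}}^{a}({}^mE;E)=\mathcal{L}({}^mE;E)$ for every $a$ (in particular every admissible one). Since $E^{\ast}$ is finite-dimensional, any $T\in\mathcal{L}({}^mE;E)$ is of finite type, say $T(z)=\sum_t\bigl(\prod_{r=1}^{m}\phi_r^{(t)}(z_r)\bigr)y_t$. Expanding $T(a_1+x_j^{(1)},\ldots,a_m+x_j^{(m)})-T(a_1,\ldots,a_m)$ by multilinearity, the increment is a finite sum of sequences $\bigl(\prod_{r\in S}\phi_r^{(t)}(x_j^{(r)})\cdot c\,y_t\bigr)_{j=1}^{\infty}$ over nonempty $S\subseteq\{1,\ldots,m\}$, with $c=\prod_{r\notin S}\phi_r^{(t)}(a_r)$ a scalar. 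Each such term is the value at $(x_j^{(r)})_{r\in S}$ of the $|S|$-linear finite-type operator $(z_r)_{r\in S}\mapsto\bigl(\prod_{r\in S}\phi_r^{(t)}(z_r)\bigr)c\,y_t$, which is $\gamma_{(s;s_1)}$-summing at the origin by the standing nontriviality assumption; since $(x_j^{(r)})_{j=1}^{\infty}\in\gamma_{s_1}(E)$ for every $r$, each term lies in $\gamma_s(E)$, and so does their finite sum, $\gamma_s(E)$ being a vector space. Hence $T$ is $\gamma_{(s;s_1)}$-summing at $a$. Note that this direction uses only nontriviality and $\dim E^{\ast}<\infty$; the linear Dvoretzky--Rogers hypothesis enters solely in $(i)\Rightarrow(ii)$.

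I expect the finite-type reduction in $(iii)\Rightarrow(i)$ to be the delicate point: at a general $a$ the increment is multi-affine rather than multilinear, and its higher-order monomials contain products $\prod_{r\in S}\phi_r^{(t)}(x_j^{(r)})$ of several evaluated sequences, which need not behave well (for instance, need not be solid) inside an abstract $\gamma_{s_1}(\mathbb{K})$. The resolution is to refrain from controlling these products directly and instead to read each monomial as the image of a single tuple under a fixed finite-type $|S|$-linear operator, whose summability at the origin is granted by nontriviality — which one should be careful to invoke for all arities $k\le m$, not merely $k=m$.
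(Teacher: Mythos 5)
Your argument is correct, but note that the paper itself offers no proof of Theorem~\ref{tal}: it is only asserted (``with some effort one can obtain\dots'') and justified by pointing to the particular cases in the cited references, so there is no in-paper proof to compare against. What you wrote is precisely the standard adaptation of those particular-case proofs: $(i)\Rightarrow(ii)$ by composing a linear operator $u\in\mathcal{L}(E;E)\setminus\Pi_{\gamma_{(s,s_1)}}(E;E)$ (supplied by the assumed linear Dvoretzky--Rogers equivalence) with functionals normalized at the nonzero coordinates of the admissible point, and testing with the sequence $(x_j)$ in the distinguished slot and zero sequences elsewhere; $(iii)\Rightarrow(i)$ by contraposition, using that on a finite-dimensional space every $m$-linear map is of finite type and reading each monomial of the multi-affine expansion as the image of a lower-arity finite-type operator that is summing at the origin. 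Both steps check out: the zero sequence lies in $\gamma_{s_1}(E)$ since it is a vector space, the increment collapses to $(u(x_j))_j\notin\gamma_s(E)$, and the finite sum of terms in $\gamma_s(E)$ stays in $\gamma_s(E)$. The one delicate point is exactly the one you flagged: the expansion at a nonzero point needs the finite-type operators of \emph{every} arity $k\le m$ to be summing at the origin, not just arity $m$. This is consistent with the paper's tacit nontriviality assumption (stated for the generic class, hence reasonably read as holding for all arities), and in the concrete applications it is forced anyway by the linear hypothesis; but it is worth stating explicitly, since arity-$m$ nontriviality alone does not imply it in the abstract setting (in the absolutely summing case, $q/m\le p<q$ gives arity-$m$ nontriviality while rank-one linear operators fail to be summing, and then the finite-dimensional implication genuinely breaks). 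So: correct proof, same route the paper intends, with a hypothesis clarification that strengthens rather than weakens the write-up.
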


\begin{corollary}
Let $E$ be an infinite-dimensional Banach space, $a=\left(  a_{1}%
,...,a_{m}\right)  \in E^{m}$, $m\geq2.$ If $\ \Pi_{\gamma_{\left(
s,s_{1}\right)  }}^{a}\left(  ^{m}E;E\right)  =\mathcal{L}\left(
^{m}E;E\right)  $, then card$~\left\{  i:a_{i}=0\right\}  \geq2$. In
particular, if $\ \Pi_{\gamma_{\left(  s,s_{1}\right)  }}^{a}\left(
^{2}E;E\right)  =\mathcal{L}\left(  ^{2}E;E\right)  $ then $a=0$.
\end{corollary}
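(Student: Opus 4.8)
The plan is to obtain the Corollary as an immediate contrapositive of Theorem~\ref{tal}, so the work is essentially logical bookkeeping once that theorem is in hand.

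First I would record the one substantive input: since $E$ is infinite-dimensional, statement $(i)$ of Theorem~\ref{tal} holds, and therefore so does $(ii)$. Explicitly, for \emph{every} point $a=(a_1,\dots,a_m)\in E^m$ for which either $a_i\neq 0$ for all $i$, or else $a_i=0$ for exactly one index $i$, one has
\[
\Pi_{\gamma_{(s,s_1)}}^{a}(^{m}E;E)\neq\mathcal{L}(^{m}E;E).
\]

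Next I would argue by contraposition. Assume $\Pi_{\gamma_{(s,s_1)}}^{a}(^{m}E;E)=\mathcal{L}(^{m}E;E)$ for some $a=(a_1,\dots,a_m)\in E^m$. By the displayed implication above, $a$ cannot belong to either of the two admissible families; that is, it is false that all the $a_i$ are nonzero, and it is false that exactly one $a_i$ vanishes. The first negation gives $\mathrm{card}\{i:a_i=0\}\geq 1$, and the second rules out $\mathrm{card}\{i:a_i=0\}=1$; together these yield $\mathrm{card}\{i:a_i=0\}\geq 2$, which is the main assertion. For the stated special case $m=2$, the inequality $\mathrm{card}\{i:a_i=0\}\geq 2$ leaves no choice but $a_1=a_2=0$, i.e. $a=0$.

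I do not expect a genuine obstacle here, since all the analytic content is already absorbed into Theorem~\ref{tal} (and, through it, into Lemma~\ref{9.2} and the Baire category argument of Theorem~\ref{4.1}). The only point deserving a moment's attention is carrying out the negation of the disjunctive hypothesis on the point $a$ correctly — namely that ``not (every coordinate nonzero) and not (exactly one coordinate zero)'' is precisely equivalent to ``at least two coordinates are zero'' — after which the conclusion, and its $m=2$ specialization, follow with no further computation.
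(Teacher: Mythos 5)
Your argument is correct and coincides with what the paper intends: the corollary is exactly the contrapositive of the implication $(i)\Rightarrow(ii)$ of Theorem~\ref{tal}, and the paper states it without further proof for this reason. The logical negation you spell out (``not all nonzero'' and ``not exactly one zero'' gives at least two zero coordinates, hence $a=0$ when $m=2$) is precisely the intended bookkeeping.
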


The above results recover the following particular cases:

\begin{itemize}
\item \textbf{Absolutely summing multilinear operators.} See
\cite[\textit{Theorem }3.2]{pellscan}.

\item \textbf{Almost summing multilinear operators}. See
\cite[\textit{Theorem} 3.2]{pelrib}.

\item \textbf{Cohen strongly summing multilinear operators.} See
\cite[\textit{Theorem} 6.1.9]{jamtese}.
\end{itemize}

\bigskip

\end{document}